\documentclass[12pt,leqno]{amsart}
\topmargin=0.02cm
\textwidth =  17cm
\textheight = 23cm
\baselineskip=11pt
\setlength{\oddsidemargin}{0.01 pt}
\setlength{\evensidemargin}{0.01 pt}

\usepackage{amsmath,amsfonts,amssymb,amsthm}
\usepackage{graphicx}
\graphicspath{ }
\usepackage{wrapfig}
\usepackage{accents}
\usepackage{caption}
\usepackage{subcaption}
\usepackage{calligra}

\numberwithin{figure}{section}
\newcommand\norm[1]{\left\lVert#1\right\rVert}

\theoremstyle{plain}
\newtheorem{theorem}{Theorem}
\newtheorem{lemma}[theorem]{Lemma}
\newtheorem{proposition}[theorem]{Proposition}
\newtheorem{corollary}{Corollary}[theorem]
\theoremstyle{definition}
\newtheorem{definition}{Definition}[section]

\theoremstyle{remark}

\usepackage{mathtools}
\begin{document}
\title[Integral Liouville theorem in manifold]{Integral Liouville theorem in a complete Riemannian manifold}
\author[A. A. Shaikh and C. K. Mondal]{Absos Ali Shaikh$^{1*}$ and Chandan Kumar Mondal$^2$}

\begin{abstract}
If the Killing vector field in a Riemannian manifold is the gradient of a smooth
real valued function, then it is called Killing potential. In this paper we have deduced a
necessary condition for the existence of Killing potential in a complete Riemannian manifold.
Yau proved the Liouville theorem of harmonic function in a Riemannian manifold using gradient
estimation and after that many authors have generalized this concept and investigated various
types of Liouville theorems of harmonic functions. In this article we have also proved a Liouville
theorem in integral form of harmonic functions in a complete Riemannian manifold. Finally
we have studied the behaviour of harmonic functions in a complete Riemannian manifold that
satisfies some gradient Ricci solition and showed that harmonic function is a constant multiple
of distance function along some geodesics.
\end{abstract}
\noindent\footnotetext{ $^*$ Corresponding author.\\
$\mathbf{2010}$\hspace{5pt}Mathematics\; Subject\; Classification: 53C21, 53C43, 58E20,  58J05.\\ 
{Key words and phrases: Killing vector field; Killing potential; harmonic function; subharmonic function; convex function; Neumann-Poincar\'{e} inequality; Potential function.} }
\maketitle
\section{\textbf{Introduction}}
The field of harmonic analysis in a Riemannian manifold was developed by the seminal work of Yau \cite{YAU75} in 1985. The existence of harmonic function is very significant in differential geometry as it depends on the topological and geometrical properties of a manifold. In that paper Yau proved Liouville theorem for positive harmonic functions on a complete manifold with positive semi-definite Ricci curvature. In particular, he showed that a positive harmonic function on such a manifold must be constant. Since then the study of various aspects of harmonic functions become an active research area in the field of geometric analysis and various authors proved many important results in this field, for reference see (\cite{MT14}, \cite{XU16}, \cite{CH83}, \cite{CM97}). Later Yau's Liouville theorem has been generalized and extended. Cheng-Yau \cite{CY75} generalized the Liouville theorem and proved Harnack inequality. Andersen \cite{AN83} and Sullivan \cite{SU83} proved that simply connected Riemannian manifolds of negative sectional curvature admits many bounded harmonic functions. For more results see \cite{CM96}. Recently, this result was extended by Xia \cite{XI14} in Finsler manifolds under the condition that the weighted Ricci curvature has a lower bound and by Zhang-Zhu \cite{ZZ12} to the Alexandrov spaces. Liouville theorem has also been extended in case of subharmonic functions, see \cite{CKS98}. \\
\indent Motivating by these works, we have proved a Liouville theorem in integral form and studied
the behaviour of harmonic functions in a complete Riemannian manifold. In particular, we have
showed that in a complete Riemannian manifold $(M,g)$ if a harmonic function takes negative
value at a point, where the Riemannian metric $g$ is rotationally symmetric and whose injective
radius is greater than one, then the integration of the harmonic function in any open ball with
center at that particular point and radius less than the injective radius of that point remains
invariant. We also showed that in a complete non-compact Riemannian manifold which satisfies
gradient Ricci solition, harmonic function is equivalent to the distance function, i.e., they differ by some
constant terms. We have also studied the Killing vector field in a Riemannian manifold and
proved the non-existence of Killing potential in a complete Riemannian manifold with positive
lower bound Ricci curvature.\\
\indent The paper is organized as follows: Section 2 deals with some preliminaries concept of Riemannian
manifold and some definitions, which are needed for the rest of this paper. Section 3
deals with the study of the Killing vector field and Killing potential in a complete Riemannian
manifold. In this section we have deduced a necessary condition for the existence of Killing potential
in a complete Riemannian manifold. Section 3 is devoted to the study of some properties
of harmonic functions and proved a Liouville theorem, which is the main theorem in this article,
for a non-positive harmonic function. In this section we have also deduced some inequalities of
harmonic functions and superharmonic functions in a complete Riemannian manifold. In the
last section, we have investigated the behavior of harmonic functions in a complete Riemannian
manifold satisfying gradient Ricci solition and showed that such harmonic function reduces to
a simple form along some geodesics.
\section{\textbf{Preliminaries and definitions}}
In this section we have discussed some basic facts of a Riemannian manifold $(M,g)$, which will be used throughout this paper (for reference see \cite{PE06}). The tangent space at the point $p\in M$ is denoted by $T_pM$ and the tangent bundle is defined by $TM=\cup_{p\in M}T_pM$. The length $l(\gamma)$ of the curve $\gamma:[a,b]\rightarrow M$ is given by
\begin{eqnarray*}
l(\gamma)&=&\int_{a}^{b}\sqrt{g_{\gamma(t)}({\gamma}'(t),{\gamma}'(t))}\ dt\\
&=&\int_{a}^{b}\norm {{\gamma}'(t)}dt.
\end{eqnarray*}
A curve $\gamma$ is said to be a geodesic if $\nabla_{{\gamma}'(t)}{\gamma}'(t)=0\ \forall t\in [a,b]$, where $\nabla$ is the Riemannian connection of $g$.
For any point $p\in M$, the exponential map $exp_p:V_p\rightarrow M$ is defined by
$$exp_p(u)=\gamma_u(1),$$
where $\gamma_u$ is a geodesic with $\gamma(0)=p$ and ${\gamma}'_u(0)=u$ and $V_p$ is a collection of vectors of $T_pM$ such that for each element $u\in V_p$, the geodesic with initial tangent vector $u$ is defined on $[0,1]$. It can be easily seen that for a geodesic $\gamma$, the norm of a tangent vector is constant, i.e., $\norm {{\gamma}'(t)}$ is constant. If the tangent vector of a geodesic is of unit norm, then the geodesic is called normal. If the exponential map exp is defined at all points of $T_pM$ for each $p\in M$, then $M$ is called complete. Hopf-Rinow theorem provides some equivalent cases for the completeness of $M$. Let $x$, $y\in M$. The distance between $p$ and $q$ is defined by
$$d(x,y)=\inf\{l(\gamma):\gamma \text{ be a curve joining }x \text{ and }y\}.$$
A geodesic $\gamma$ joining $x$ and $y$ is called minimal if $l(\gamma)=d(x,y)$. Hopf-Rinow theorem guarantees the existence of minimal geodesics between two points of $M$. The injective radius $inj(x)$ of a point $x\in M$ is the largest $r>0$ for which any geodesic of length less than $r$ and having $x$ as an endpoint is length minimizing. Let $x\in M$ and $0<r<inj(x)$, then in $B_r(x)$ we can define the polar coordinates of $\mathbb{R}^n$ as $(r,\theta^1,\cdots,\theta^{n-1})$, the metric of $M$ can be expressed in polar form, i.e.,
\begin{equation}\label{eq12}
ds^2=dr^2+\sum_{i,j}g_{ij}d\theta^id\theta^j=dr^2+h(r,\theta)^2d\Theta^2
\end{equation}
on $B_r(x)$, where $g_{ij}=g(\frac{\partial}{\partial \theta^i},\frac{\partial}{\partial \theta^j})$ and $d\Theta^2$ is the canonical metric on the unit sphere of $T_xM$. Hence $r(y)$ denotes the geodesic distance from $x$ to $y$ for any $y\in B_r(x)$. The Riemannian volume element of $\partial B_r(x)$ can be expressed as $dS_r=\sqrt{D(r,\Theta)}d\theta^1\cdots d\theta^{n-1},$ where $D=det(g_{ij})$. If the function $h$ in (\ref{eq12}) depends only on $r$ but not on $\theta$, then $M$ is called rotationally symmetric \cite{CHO83} at $x$. \\
\indent  A smooth vector field is a smooth function $X:M\rightarrow TM$ such that $\pi\circ X=id_M$, where $\pi:TM\rightarrow M$ is the projection map. Given any vector field $X\in\chi(M)$ and a covariant tensor field $\omega$ of order $r$ on a Riemannian manifold, the Lie derivative of $\omega$ with respect to $X$ is defined by
$$(\mathcal{L}_X\omega)(X_1,\dots,X_r)=X(\omega(X_1,\dots,X_r))-\sum_{i=1}^{r}\omega(X_1,\dots,[X,X_i],\dots,X_n),$$
where $ X_i\in \chi(M)\text{ for }i=1,\dots,r.$ In particular, when $\omega=g$, then 
$$(\mathcal{L}_Xg)(Y,Z)=g(\nabla_YX,Z)+g(Y,\nabla_ZX) \text{ for }Y,Z\in \chi(M).$$\\
\indent Gromoll and Meyer \cite{GM69} introduced the notion of pole in a Riemannian manifold. A point $o\in M$ is called a pole of $M$ if the exponential map at $o$ is a global diffeomorphism and a manifold $M$ with a pole $o$ is denoted by $(M,o)$. If a manifold possesses a pole then it can be easily seen that the manifold is complete although the converse does not hold always, see \cite{14}. Simply connected complete Riemannian manifold with non-positive sectional curvature and a two-dimensional sphere with one point removed are the examples of Riemannian manifolds with a pole.\\
\indent  The gradient of a smooth function $u:M\rightarrow\mathbb{R}$ at the point $p\in M$ is defined by $\nabla u(p)=g^{ij}\frac{\partial u}{\partial x^j}\frac{\partial}{\partial x^i}\mid_p.$ It is the unique vector field such that $g( \nabla u,X)=X(u)$ for any smooth vector field $X$ in $M$. The Hessian $Hess(u)$ is the symmetric $(0,2)$-tensor field, defined by $\nabla^2u(X,Y)=Hess(u)(X,Y)=g(\nabla_X\nabla u,Y)$ for all smooth vector fields $X,Y$ of $M$. In local coordinates this can be written as
$$(\nabla^2u)_{ij}=\partial_{ij}u-\Gamma^k_{ij}\partial_ku.$$
Hence in general one can write
$$|\nabla^ku|^2=g^{i_1j_1}\cdots g^{i_kj_k}(\nabla^ku)_{i_1\cdots i_k}(\nabla^ku)_{j_1\cdots j_k}.$$
For an integer $k$ and a real number $p\geq 1$, we use the following notation
$$C^\infty_k(M)=\Big\{u\in C^\infty(M): \int_{M}|\nabla^ju|^pdV<\infty, \forall j=0,\cdots,k\Big\}.$$
\begin{definition}
The Sobolev space $W^{k,p}(M)$ is the completion of $C^p_k(M)$ with respect to the norm
$$\norm{u}_{W^{k,p}}=\sum_{j=0}^{k}\Big(\int_M|\nabla^ju|^pdV\Big)^{1/p}.$$
\end{definition}
A function $u\in C^\infty(M)$ is said to belong in the class $W^{k,p}_{loc}(M)$ if for each point $x\in M$ there exists a function $\varphi\in C^\infty(M)$ with compact support such that $\varphi\circ u$ belongs to $W^{k,p}(M).$
 For a vector field $X$, the divergence of $X$ is defined by
$$div(X)=\frac{1}{\sqrt{g}}\frac{\partial }{\partial x^j}\sqrt{g}X^j,$$
where $g=det(g_{ij})$ and $X=X^j\frac{\partial}{\partial x^j}$. The Laplacian of $u$ is defined by $\Delta u=div(\nabla u)$. 
\begin{definition}\cite{YAU75}
A $C^2$-function $u:M\rightarrow\mathbb{R}$ is said to be harmonic if $\Delta u=0$. The function $u$ is called subharmonic (resp. superharmonic) if $\Delta \geq 0$ (resp. $\Delta u\leq 0$).
\end{definition}
\begin{definition}\cite{UDR94}
 A real valued function $u$ on $M$ is called convex if for every geodesic $\gamma:[a,b]\rightarrow M$, the following inequality holds
\begin{equation*}
u\circ\gamma((1-t)a+tb)\leq (1-t)u\circ\gamma(a)+tu\circ\gamma(b)\quad \forall t\in [0,1],
\end{equation*}
or if $u$ is differentiable, then 
\begin{equation*}
g(\nabla u,X)_x\leq u(exp_x\nabla u)-u(x), \ \forall X\in T_xM.
\end{equation*}
\end{definition}

\section{\textbf{Killing potential in a manifold with positive Ricci curvature}}
A smooth vector field $X\in\chi(M)$ is a Killing vector field \cite{PE06} if
$$\mathcal{L}_Xg=0, \text{ i.e., }g(\nabla_YX,Z)+g(Y,\nabla_ZX )=0\quad \forall Y,Z\in\chi(M).$$
Killing vector field $X$ has the property that local $1$-parameter group generated by $X$ consists of local isometries. The existence of Killing vector field implies various interesting geometric and topological structures of a Riemannian manifold. For example, every two dimensional Riemannian manifold with a unit Killing vector field is isometric to one of the complete surfaces \cite{BN08}: the Euclidean plane, cylinder, torus, M\"{o}bius strip or Klein bottle. If a complete non-compact Riemannian manifold $M$ with negative semi-definite Ricci curvature admits a non-trivial Killing vector field with finite global norm, then volume of $M$ is finite \cite{YO82}.\\
\indent In 1999, Cr\u{a}\c{s}m\u{a}reanu \cite{CM99} defined the notion of Killing potential in a complete Riemannian manifold. A function $f\in C^\infty(M)$ is called Killing potential if $\nabla f$ is a Killing vector field \cite{CM99}. Cr\u{a}\c{s}m\u{a}reanu also classified the manifolds which admit non-trivial Killing potential \cite[Classification Theorem]{CM99}.\\
In this section we will prove that no non-trivial Killing potential exists in a complete Riemannian manifold with positive definite Ricci curvature.
\begin{theorem}
There does not exist any non-trivial Killing potential $u\in C^2(M)$ in a complete Riemannian manifold $M$ with $Ric_M\geq cg$ for some constant $c>0$.
\end{theorem}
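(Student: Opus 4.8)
The plan is to show that requiring $\nabla u$ to be a Killing field forces $u$ to have vanishing Hessian, and then to play this against the curvature lower bound via the Bochner formula. First I would specialize the Killing equation to $X=\nabla u$. Using the expression for $\mathcal{L}_X g$ recorded in the preliminaries together with the symmetry of the Hessian,
$$(\mathcal{L}_{\nabla u}g)(Y,Z)=g(\nabla_Y\nabla u,Z)+g(Y,\nabla_Z\nabla u)=\nabla^2u(Y,Z)+\nabla^2u(Z,Y)=2\,\nabla^2u(Y,Z).$$
Hence $\nabla u$ is Killing if and only if $\nabla^2u\equiv 0$; in other words, a Killing potential is precisely a function with parallel gradient.

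From $\nabla^2u=0$ two facts follow at once. Taking the metric trace gives $\Delta u=0$, so $u$ is harmonic. Moreover, for any vector field $Y$,
$$Y(|\nabla u|^2)=2\,g(\nabla_Y\nabla u,\nabla u)=2\,\nabla^2u(Y,\nabla u)=0,$$
so $|\nabla u|^2$ is constant on $M$ and in particular $\Delta|\nabla u|^2=0$. Next I would invoke the Bochner formula
$$\frac{1}{2}\Delta|\nabla u|^2=|\nabla^2u|^2+g(\nabla u,\nabla\Delta u)+Ric_M(\nabla u,\nabla u).$$
Substituting $\nabla^2u=0$, $\Delta u=0$, and $\Delta|\nabla u|^2=0$ annihilates every term except the curvature term, yielding $Ric_M(\nabla u,\nabla u)=0$ at every point of $M$.

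Finally, the hypothesis $Ric_M\geq cg$ with $c>0$ gives
$$0=Ric_M(\nabla u,\nabla u)\geq c\,|\nabla u|^2\geq 0,$$
which forces $|\nabla u|^2\equiv 0$. Thus $\nabla u\equiv 0$, so $u$ is constant and the Killing potential is trivial, as claimed.

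The argument is essentially a pointwise computation, so the main point is conceptual rather than analytic: recognizing that, for a gradient field, the Killing condition degenerates into the parallelism $\nabla^2u=0$. Once that observation is secured, the Bochner identity converts the positivity of the Ricci curvature directly into the vanishing of $\nabla u$, with no integration-by-parts or completeness-dependent boundary estimate required. One could alternatively bypass Bochner altogether by noting that a parallel vector field lies in the kernel of the curvature tensor, whence $R(\cdot,\nabla u)\nabla u=0$ and $Ric_M(\nabla u,\nabla u)=0$ follow immediately; I would record this as a remark, since it also clarifies that completeness is not actually needed for the conclusion.
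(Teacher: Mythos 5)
Your proof is correct and follows essentially the same route as the paper: both reduce the Killing condition on $\nabla u$ to the parallelism $\nabla^2u\equiv 0$ (hence $\Delta u=0$ and $|\nabla u|^2$ constant) and then apply the Bochner formula together with $Ric_M\geq cg$, $c>0$, to force $\nabla u\equiv 0$. The only differences are cosmetic: you derive from scratch the facts the paper cites from Cr\u{a}\c{s}m\u{a}reanu, and you conclude directly from $Ric(\nabla u,\nabla u)=0$ rather than via the paper's contradiction $0=\Delta|\nabla u|^2\geq 2c\,|\nabla u|^2>0$; your closing remark that the argument is pointwise and never uses completeness is also accurate.
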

\begin{proof}
Suppose $u\in C^2(M)$ is a non-trivial Killing potential. Now Hessian of $u$ vanishes \cite{CM99}. Since $u$ is a Killing potential so $u$ is harmonic function \cite{CM99}. Hence using Bochner's formula we get
$$\frac{1}{2}\Delta|\nabla u|^2=|Hess(u)|^2+Ric(\nabla u,\nabla u)=Ric(\nabla u,\nabla u).$$
Again $Ric_M\geq cg$ implies that
$$\Delta |\nabla u|^2\geq 2cg(\nabla u,\nabla u)=2c|\nabla u|^2.$$
Since $u$ is Killing potential so $\nabla u$ is covariant constant field \cite{CM99}, i.e., $\nabla_ X(\nabla u)=0 \quad \forall X\in \chi(M)$, which implies that $g(\nabla u,\nabla u)=C,$ for some constant $C> 0$. Hence $\Delta |\nabla u|^2=0$ and it leads to a contradiction. Hence $u$ is constant.
\end{proof} 
\begin{lemma}\cite[Corollary 2]{BN08}
A complete Riemannian manifold $M$ of negative definite Ricci curvature has no nontrivial Killing fields of constant length.
\end{lemma}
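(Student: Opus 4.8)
The plan is to imitate the Bochner technique already used for the Killing potential theorem above, but now applied to a general Killing field rather than a gradient one. Let $X$ be a Killing field on $M$ with $|X|$ constant. The central tool is the Bochner--Weitzenb\"{o}ck formula for Killing fields, which reads
$$\frac{1}{2}\Delta|X|^2=|\nabla X|^2-Ric(X,X).$$
First I would recall why this holds: for a Killing field the endomorphism $\nabla X$ is skew-symmetric, and the Weitzenb\"{o}ck identity gives $\text{tr}(\nabla^2X)=-Ric(X)$, that is, the rough Laplacian of $X$ equals minus the Ricci tensor applied to $X$. Combining this with the general pointwise identity $\tfrac{1}{2}\Delta|X|^2=|\nabla X|^2+g(\text{tr}(\nabla^2X),X)$ produces the displayed formula. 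Note the sign on the Ricci term is opposite to the one in the theorem above, precisely because there $\nabla(\nabla u)=Hess(u)$ is symmetric whereas here $\nabla X$ is antisymmetric.

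Next, since $|X|$ is constant we have $\Delta|X|^2=0$, so the formula collapses to the pointwise equality $|\nabla X|^2=Ric(X,X)$ at every point of $M$. The left-hand side is manifestly nonnegative, while the negative definiteness of the Ricci tensor forces $Ric(X,X)\le 0$, with equality only when $X=0$. Hence at each point both sides must vanish, and in particular $Ric(X,X)=0$, which by negative definiteness yields $X=0$. As the point was arbitrary, $X\equiv 0$, so no nontrivial Killing field of constant length can exist.

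I expect the only genuine obstacle to be the bookkeeping in establishing the Bochner formula with the correct sign conventions, in particular verifying the Weitzenb\"{o}ck identity $\text{tr}(\nabla^2X)=-Ric(X)$ for Killing fields and matching it to the convention $\Delta u=\text{div}(\nabla u)$ adopted in this paper. Once the formula is in hand the argument is purely pointwise: no integration, compactness, or even completeness is required, the constant-length hypothesis doing all the work by killing the Laplacian term. This is why the conclusion is sharper and simpler than the positive-curvature Killing potential theorem, where the gradient structure had to be exploited to reach the contradiction.
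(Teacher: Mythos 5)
Your proof is correct. Note, however, that the paper offers no proof of this lemma at all: it is imported verbatim from \cite[Corollary 2]{BN08}, so there is no internal argument to compare against. Your Bochner computation is precisely the standard proof underlying the cited result: the formula $\tfrac{1}{2}\Delta|X|^2=|\nabla X|^2-Ric(X,X)$ for Killing fields is correct (it follows, as you say, from $\mathrm{tr}(\nabla^2X)=-Ric(X)$ together with $\tfrac{1}{2}\Delta|X|^2=g(\mathrm{tr}(\nabla^2X),X)+|\nabla X|^2$, and the sign conventions are consistent with $\Delta u=\mathrm{div}(\nabla u)$ and with the Bochner formula as used in the paper's Theorem 1), and constant length reduces it to the pointwise identity $Ric(X,X)=|\nabla X|^2\geq 0$, which negative definiteness of $Ric$ turns into $X\equiv 0$. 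Two small observations: your remark that completeness is never used is accurate when, as here, the Ricci tensor is assumed negative definite at every point --- the hypothesis survives in the statement only because it is inherited from the framing in \cite{BN08}; and your sign discussion correctly identifies why the Ricci term enters with the opposite sign to the harmonic-gradient case (skew-symmetry of $\nabla X$ versus symmetry of $Hess(u)$), which is exactly the structural contrast with the paper's Theorem 1.
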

Hence we state the following:
\begin{corollary}
If a complete Riemannian manifold $M$ admits a Killing potential $u\in C^\infty(M)$, then $Ric(\nabla u,\nabla u)_p=0\ \forall p\in M$.
\end{corollary}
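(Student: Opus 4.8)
The plan is to reuse the Bochner computation already carried out in the proof of the theorem above, observing that it never actually needed the curvature hypothesis $Ric_M\geq cg$. The three structural facts about a Killing potential, all cited from \cite{CM99} and recalled in that proof, are the only inputs required: namely that $u$ is harmonic ($\Delta u=0$), that $Hess(u)\equiv 0$, and that $\nabla u$ is covariant constant, i.e. $\nabla_X(\nabla u)=0$ for all $X\in\chi(M)$.

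First I would record that the parallelism of $\nabla u$ forces its norm to be constant: for any vector field $X$ one has $X|\nabla u|^2=2g(\nabla_X\nabla u,\nabla u)=0$, so $|\nabla u|^2\equiv C$ and hence $\Delta|\nabla u|^2=0$. Next I would invoke Bochner's formula in the form used above,
$$\frac{1}{2}\Delta|\nabla u|^2=|Hess(u)|^2+g(\nabla u,\nabla(\Delta u))+Ric(\nabla u,\nabla u).$$
Since $u$ is harmonic the middle term drops, and since $Hess(u)\equiv 0$ the first term drops, leaving $\tfrac{1}{2}\Delta|\nabla u|^2=Ric(\nabla u,\nabla u)$. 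Combining this with $\Delta|\nabla u|^2=0$ from the previous step yields $Ric(\nabla u,\nabla u)_p=0$ at every $p\in M$, which is the assertion. An even shorter route avoids Bochner entirely: because $\nabla u$ is parallel, the curvature tensor annihilates it, $R(X,\nabla u)\nabla u=0$ for all $X$, and tracing over an orthonormal frame gives $Ric(\nabla u,\nabla u)=0$ pointwise.

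The main point — more an observation than an obstacle — is that \emph{no} sign assumption on the curvature enters, which is exactly why the statement is phrased for an arbitrary complete manifold rather than under the hypothesis of the theorem. This is what ties it to the preceding Lemma: since $\nabla u$ is a Killing field of constant length, were $M$ to have negative definite Ricci curvature the Lemma would force $\nabla u$ to be trivial, and the vanishing $Ric(\nabla u,\nabla u)=0$ established here is precisely consistent with that dichotomy. The only care needed in the write-up is to keep the argument pointwise, since the constant $C=|\nabla u|^2$ may well be positive, so that the conclusion genuinely constrains the Ricci tensor in the direction $\nabla u$ rather than merely forcing $\nabla u$ to vanish.
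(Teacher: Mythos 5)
Your proof is correct and matches the paper's intended argument: the corollary follows from the very computation in the theorem's proof, namely the Bochner identity reducing to $\tfrac{1}{2}\Delta|\nabla u|^2=Ric(\nabla u,\nabla u)$ together with the constancy of $|\nabla u|^2$ (forced by the parallelism of $\nabla u$), and you rightly observe that no curvature sign hypothesis is needed. Your alternative route, using that a parallel field satisfies $R(X,\nabla u)\nabla u=0$ and tracing, is an equally valid and even more direct verification of the same fact.
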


\section{\textbf{Some inequalities of harmonic functions in a complete Riemannian manifold} }

\indent In this section, we have proved that if a harmonic function attains negative value at a point in a complete Riemannian manifold then the integral of that function ramains invariant in any ball with center at that particular point. The main theorem of this section is as follows:
\begin{theorem}[Integral Liouville Theorem]\label{th2}
Let $M$ be a complete Riemannian manifold and $u:M\rightarrow\mathbb{R}$ be a harmonic function. If there exists a point $p\in M$, where $M$ is rotationally symmetric and $u(p)<0$ with $inj(p)>1$, then 
\begin{equation*}
\int_{B_r}(K+u)dV=0\quad \forall r\in [0,inj(p)),
\end{equation*}
for some constant $K>0$.
\end{theorem}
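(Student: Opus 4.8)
The plan is to reduce the statement to a mean value property for $u$ over the geodesic spheres $\partial B_s(p)$ and then integrate in the radial variable. Since we only need $r\in[0,inj(p))$, polar coordinates $(s,\Theta)$ centred at $p$ are valid on $B_r$, and by rotational symmetry the metric takes the form \eqref{eq12} with $h=h(s)$ independent of $\Theta$. Consequently the area element of $\partial B_s(p)$ factors as $dS_s=h(s)^{n-1}\,d\omega$, where $d\omega$ is the canonical measure on the unit sphere $S^{n-1}\subset T_pM$ and $|S^{n-1}|=\omega_{n-1}$, while $|\partial B_s|=h(s)^{n-1}\omega_{n-1}$.

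First I would introduce the spherical average
$$A(s)=\frac{1}{\omega_{n-1}}\int_{S^{n-1}}u(exp_p(s\Theta))\,d\omega(\Theta)$$
and show it is constant in $s$. Differentiating under the integral sign gives $A'(s)=\frac{1}{\omega_{n-1}}\int_{S^{n-1}}\frac{\partial u}{\partial s}\,d\omega$, where $\frac{\partial u}{\partial s}=g(\nabla u,\partial_s)$ is the radial derivative. On the other hand, since in \eqref{eq12} one has $g_{ss}=1$ with no cross terms, the outward unit normal to $\partial B_s(p)$ is $\partial_s$, so the divergence theorem together with $\Delta u=0$ yields
$$0=\int_{B_s}\Delta u\,dV=\int_{\partial B_s}\frac{\partial u}{\partial s}\,dS_s=h(s)^{n-1}\int_{S^{n-1}}\frac{\partial u}{\partial s}\,d\omega.$$
As $h(s)>0$ for $s\in(0,inj(p))$, the last integral vanishes, so $A'(s)=0$ and hence $A(s)\equiv A(0)=u(p)$ by continuity of $u$ on the compact sphere.

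With the mean value property in hand, I would integrate radially. Using $\int_{\partial B_s}u\,dS_s=h(s)^{n-1}\int_{S^{n-1}}u\,d\omega=|\partial B_s|\,A(s)=u(p)\,|\partial B_s|$ and the co-area formula $\int_{B_r}u\,dV=\int_0^r\big(\int_{\partial B_s}u\,dS_s\big)\,ds$, I obtain
$$\int_{B_r}u\,dV=u(p)\int_0^r|\partial B_s|\,ds=u(p)\,|B_r|.$$
Therefore $\int_{B_r}(K+u)\,dV=(K+u(p))\,|B_r|$, and choosing $K=-u(p)$, which is strictly positive precisely because $u(p)<0$, forces the right-hand side to vanish for every $r\in[0,inj(p))$.

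The crux of the argument, and the only place the rotational symmetry is genuinely used, is the mean value step: symmetry lets me pull the density $h(s)^{n-1}$ out of the spherical integral, so that harmonicity forces $\int_{S^{n-1}}\partial_s u\,d\omega=0$. Without it the divergence theorem would only give $\int_{S^{n-1}}\partial_s u\,\sqrt{D(s,\Theta)}\,d\omega=0$ with a $\Theta$-dependent weight, and $A(s)$ would in general fail to be constant. The sign hypothesis $u(p)<0$ enters solely to make $K>0$; I expect the condition $inj(p)>1$ to be inessential to the stated conclusion beyond guaranteeing that the polar chart exists on the balls in question.
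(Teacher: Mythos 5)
Your proof is correct and follows essentially the same route as the paper: both establish that the spherical average of $u$ is constant (equal to $u(p)$) by combining rotational symmetry with the divergence theorem, and then integrate radially via the co-area formula, taking $K=-u(p)>0$. The only cosmetic difference is that you normalize over the unit sphere $S^{n-1}\subset T_pM$ rather than over $\partial B_1$ as the paper does, which is why, as you correctly observe, the hypothesis $inj(p)>1$ becomes inessential in your version --- the paper needs it only so that its normalization by $Vol(\partial B_1)$ makes sense.
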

\begin{proof}
Let $p\in M$ be a fixed point and $B_r$ be an open geodesic ball with center at $p$ and radius $r$ such that $0\leq r\leq inj(p)$ and also $inj(p)>1$. Now for each $r>0$, we define
\begin{equation}\label{eq1}
v(r)=\frac{1}{Vol(\partial B_r)}\int_{\partial B_r}udS_r,
\end{equation}
where $dS_r$ is the $(n-1)$-dimensional volume element of $B_r$. Since $M$ is rotationally symmetric at $p$ so using (\ref{eq12}) and (\ref{eq1}), we get
\begin{equation*}
v(r)=\frac{1}{Vol(\partial B_1)}\int_{\partial B_1}u(r\eta)dS_1.
\end{equation*}
Now differentiating the above function with respect to $r$, we obtain
$$v'(r)=\frac{1}{Vol(\partial B_1)}\int_{\partial B_1}\frac{\partial u}{\partial r}(r\eta)dS_1=\frac{1}{Vol(\partial B_r)}\int_{\partial B_1}\partial_rudS_r.$$
Hence for $r>0$, by using divergence theorem, we get
\begin{equation*}
v'(r)Vol(\partial B_r)=\int_{B_r}\Delta u dV.
\end{equation*}
In view of harmonicity of $u$, the above equation implies that $v'(r)=0$, i.e., $v=C$, a constant. Since $v$ is continuous so $v$ is constant in $[0,inj(p))$. Hence (\ref{eq1}) implies that 
\begin{equation}\label{eq2}
\int_{\partial B_r}udS_r=CVol(\partial B_r).
\end{equation}
Since $u(p)<0$, then there exists $\epsilon>0$, such that $u<0$ in $\partial B_\epsilon$. Hence
\begin{equation*}
C=\frac{1}{Vol(\partial B_\epsilon)}\int_{\partial B_\epsilon}udS_\epsilon<0.
\end{equation*}
Suppose $C=-K$, where $K>0$. Hence 
$$Vol(\partial B_r)=-\frac{1}{K}\int_{\partial B_r}udS_r.$$
Now from (\ref{eq2}), we get
\begin{equation}\label{eq5}
Vol(B_r)=\int_{0}^{r}Vol(\partial B_t)dt=-\frac{1}{K}\int_{0}^{r}\int_{\partial B_t}udS_tdt=-\frac{1}{K}\int_{B_r}udV.
\end{equation}
The above equation implies that
$$\int_{B_r}dV+\frac{1}{K}\int_{B_r}udV=0,$$
i.e.,
\begin{equation*}
\int_{B_r}(K+u)dV=0,\quad \forall r\in [0,inj(p)).
\end{equation*}
\end{proof}

\begin{corollary}
Suppose $(M,o)$ is a Riemannian manifold with a pole $o$ and is rotationally symmetric at $o$. If $u:M\rightarrow\mathbb{R}$ is a harmonic function such that $f(o)<0$, then there exists a constant $K>0$ such that
\begin{equation}\label{eq3}
\int_{B_r(o)}(K+u)dV=0,\quad \forall r\in [0,\infty).
\end{equation}
\end{corollary}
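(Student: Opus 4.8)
The plan is to obtain this corollary as an immediate specialization of Theorem \ref{th2}, the only genuine new ingredient being the observation that the pole hypothesis forces $inj(o)=\infty$. First I would recall that a manifold with a pole is automatically complete, and that, by definition of a pole, the exponential map $\exp_o$ is a global diffeomorphism of $T_oM$ onto $M$. In particular there is no point conjugate to $o$, and for each $q\in M$ the radial curve $t\mapsto \exp_o\big(t\,\exp_o^{-1}(q)\big)$ is the unique geodesic from $o$ to $q$. Since $M$ is complete, Hopf-Rinow guarantees a minimizing geodesic from $o$ to $q$; by uniqueness it coincides with this radial geodesic, so every geodesic issuing from $o$ is length minimizing. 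Hence $inj(o)=\infty$, and in particular $inj(o)>1$.

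Next I would check that the remaining hypotheses of Theorem \ref{th2} hold with the choice $p=o$: $M$ is rotationally symmetric at $o$ by assumption, and the condition $u(o)<0$ is exactly what is required (the symbol $f$ in the statement should read $u$). Because $\exp_o$ is a diffeomorphism onto all of $M$, the polar decomposition (\ref{eq12}) and the rotationally symmetric form $ds^2=dr^2+h(r)^2\,d\Theta^2$ are valid on the whole of $M\setminus\{o\}$ rather than only inside a ball of radius $inj(o)$. Consequently the spherical average $v(r)$ built in (\ref{eq1}) is well defined and smooth for every $r>0$, and the computation $v'(r)\,Vol(\partial B_r)=\int_{B_r}\Delta u\,dV=0$ from the proof of Theorem \ref{th2} shows that $v\equiv C$ on the entire half-line $[0,\infty)$.

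Finally, running the remainder of the argument in Theorem \ref{th2} verbatim, $u(o)<0$ forces $C<0$; setting $K=-C>0$ and integrating (\ref{eq2}) in $r$ yields $Vol(B_r)=-\tfrac1K\int_{B_r}u\,dV$, that is $\int_{B_r(o)}(K+u)\,dV=0$. Since $v$ is constant on all of $[0,\infty)$, the single constant $K$ serves simultaneously for every radius, and the stated range $r\in[0,inj(o))=[0,\infty)$ is recovered, giving (\ref{eq3}).

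The step I expect to carry the only real weight is the identity $inj(o)=\infty$: one must confirm not merely that $\exp_o$ has no critical points (so that there are no conjugate points) but that the radial geodesics are \emph{globally} minimizing, which I would secure through the Hopf-Rinow uniqueness argument indicated above. Everything after that is a direct application of the main theorem with $p=o$ and an injectivity radius that has been promoted from $>1$ to $+\infty$.
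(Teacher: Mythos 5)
Your proposal is correct and is exactly the argument the paper intends: the corollary is stated without proof as an immediate specialization of Theorem \ref{th2} with $p=o$, the pole hypothesis promoting $inj(o)$ to $+\infty$. Your Hopf--Rinow uniqueness argument properly justifies that radial geodesics from a pole are globally minimizing (the one step the paper leaves implicit), and your reading of $f(o)<0$ as the typo $u(o)<0$ matches the paper's usage.
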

\begin{theorem}
Assuming the conditions of the above corollary with $Ric_M\geq 0$, there exists a constant $C$ such that
\begin{equation}
\int_{B_r(o)}(C+u)dV=0,\quad \forall r\in [0,\infty).
\end{equation}
\end{theorem}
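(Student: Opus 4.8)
The plan is to reduce the statement to the preceding Corollary, whose conclusion is exactly the desired identity except for the sign restriction $u(o)<0$. The crucial observation is that the integral identity is invariant under adding a constant to $u$: writing $w=u-a$ for a constant $a$, the equation $\int_{B_r(o)}(C'+w)\,dV=0$ is equivalent to $\int_{B_r(o)}((C'-a)+u)\,dV=0$, since $C'+w=(C'-a)+u$. Hence it suffices to shift $u$ so that it becomes negative at the pole and then invoke the Corollary verbatim.

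Concretely, I would set $a=|u(o)|+1$ and $w=u-a$. As $a$ is constant, $w$ is again harmonic, and $w(o)=u(o)-a<0$ for every possible sign of $u(o)$. The hypotheses of the Corollary---that $o$ is a pole and that $M$ is rotationally symmetric at $o$---are properties of the metric alone and therefore persist for $w$. Applying the Corollary to $w$ produces a constant $K>0$ with $\int_{B_r(o)}(K+w)\,dV=0$ for all $r\in[0,\infty)$. Substituting $w=u-a$ and putting $C=K-a$ yields $\int_{B_r(o)}(C+u)\,dV=0$ for all $r$, as claimed; retracing the spherical-mean computation of Theorem \ref{th2} identifies this constant as $C=-u(o)$, since harmonicity forces $v(r)=\frac{1}{Vol(\partial B_r)}\int_{\partial B_r}u\,dS_r$ to be constant and hence equal to its limit $u(o)$.

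The curvature hypothesis $Ric_M\geq 0$ serves to situate the result among Liouville-type phenomena rather than to drive the integral identity itself. Under $Ric_M\geq 0$ one may invoke Yau's Liouville theorem: if $u$ is in addition bounded from one side, then $u$ must be constant, in which case $C+u\equiv 0$ and the identity is trivially true. The genuine content is therefore the unbounded case, where $u$ need not be constant---for instance a nonconstant harmonic function growing linearly---yet the averaged identity continues to hold with $C=-u(o)$.

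The main obstacle is conceptual rather than computational, namely the borderline configuration $u(o)=0$, at which the Corollary cannot be applied to $u$ directly because its hypothesis $u(o)<0$ is strict. This is precisely what the choice $a=|u(o)|+1>0$ circumvents, since it renders $w(o)<0$ strictly in all cases at once, avoiding any case analysis on the sign of $u(o)$. Once this reduction is in place, no estimate beyond the divergence-theorem argument already established in Theorem \ref{th2} is required.
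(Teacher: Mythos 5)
Your proposal is correct, and it follows the same basic reduction as the paper---shift $u$ by a constant so that the Corollary applies, then absorb the shift into the final constant $C$---but your execution is genuinely better at the one point where this reduction can fail. The paper first invokes $Ric_M\geq 0$ together with the Schoen--Yau fact that such a manifold admits no non-constant positive harmonic function, to produce a point $q$ with $u(q)\leq 0$; then, in the case $u(o)\geq 0$, it sets $f(x)=u(x)-u(o)$ and asserts $f(o)<0$. That assertion is false: $f(o)=0$, so the strict hypothesis of the Corollary is not met, and the paper's proof as written has a gap exactly at the borderline configuration you isolated (moreover the auxiliary point $q$ is produced but never actually used). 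Your choice $a=|u(o)|+1$, $w=u-a$ gives $w(o)\leq -1<0$ uniformly in the sign of $u(o)$, applies the Corollary verbatim, and yields the identity with $C=K-a$; it also dispenses entirely with the curvature hypothesis, showing that $Ric_M\geq 0$ plays no role in the integral identity itself---a point you correctly flag, with Yau's Liouville theorem relegated to the (trivial) one-side-bounded case. Your identification $C=-u(o)$, via the constancy of the spherical mean $v(r)=\frac{1}{Vol(\partial B_r)}\int_{\partial B_r}u\,dS_r$ and the limit $v(r)\to u(o)$ as $r\to 0$, is likewise correct and consistent with the computation in the proof of Theorem \ref{th2}. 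In short: same route as the paper, but your version repairs the strictness defect in the paper's own argument and exposes the redundancy of the Ricci assumption.
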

\begin{proof}
Let $u\in C^2(M)$ be a harmonic function. Since a manifold with positive semi-definite Ricci curvature does not admit any non-constant positive harmonic function \cite[p. 21]{SY94}, so there exists a point $q\in M$ such that $u(q)\leq 0$. Without loss of generality we can take $u(q)<0$, otherwise we have to subtract some positive constant. If $u(o)<0$, then taking $p=o$ the proof is complete. If $u(o)\geq 0$, then take $f(x)=u(x)-u(o)$ for $x\in M$. Hence $f$ is harmonic and $f(o)<0$. So using above corollary we get there is a constant $K$ such that for all $r>0$ 
\begin{eqnarray*}
0&=& \int_{B_r(o)}(K+f)dV,\\
&=&\int_{B_r(o)}(K+u(x)-u(o))dV.
\end{eqnarray*}
Hence taking $C=K+u(o)$, we get our result.
\end{proof}
\begin{corollary}
Let $(M,o)$ be a Riemannian manifold with $Ric_M\geq 0$. Suppose $M$ is rotationally symmetric at $o$. Then all non-constant convex functions $u\in C^2(M)$ with $|\nabla u|\in L^1(M)$ satisfy \begin{equation*}
\int_{B_r}(K+u)dV=0,\quad \forall r\in [0,\infty),
\end{equation*}
for some constant $K$.
\end{corollary}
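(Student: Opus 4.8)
The plan is to reduce this corollary to the preceding theorem by first showing that the convex function $u$ is in fact harmonic; once harmonicity is established, every hypothesis of the theorem above is met and the conclusion is immediate. The starting observation is that convexity of a $C^2$ function is equivalent to the Hessian being positive semi-definite: for every geodesic $\gamma$ one has $\frac{d^2}{dt^2}(u\circ\gamma) = Hess(u)(\gamma',\gamma') \geq 0$, so $Hess(u)\geq 0$. Taking the metric trace gives $\Delta u = \operatorname{tr}_g Hess(u) \geq 0$, i.e. $u$ is subharmonic. It therefore suffices to upgrade $\Delta u\geq 0$ to $\Delta u\equiv 0$ using the integrability hypothesis $|\nabla u|\in L^1(M)$.

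To carry out this upgrade I would set $\phi(r) = \int_{B_r}\Delta u\,dV$, which is non-negative and non-decreasing in $r$ because $\Delta u\geq 0$. Since $o$ is a pole, the distance function $r(\cdot)=d(o,\cdot)$ is smooth on $M\setminus\{o\}$ with $|\nabla r|=1$, so the outward unit normal on $\partial B_r$ is $\partial_r$ and the divergence theorem yields $\phi(r)=\int_{\partial B_r}\partial_r u\,dS_r$, whence $0\leq\phi(r)\leq\int_{\partial B_r}|\nabla u|\,dS_r =: g(r)$. The coarea formula (again using $|\nabla r|=1$, consistent with the polar decomposition $(\ref{eq12})$) gives $\int_0^\infty g(r)\,dr = \int_M|\nabla u|\,dV<\infty$, so $g\in L^1([0,\infty))$ and hence $\liminf_{r\to\infty}g(r)=0$. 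Choosing $r_k\to\infty$ with $g(r_k)\to 0$ forces $\phi(r_k)\to 0$; since $\phi$ is non-decreasing and non-negative, for any fixed $r$ we get $\phi(r)\leq\phi(r_k)\to 0$, so $\phi\equiv 0$. Because $\Delta u\geq 0$ is continuous and integrates to zero over every geodesic ball, $\Delta u\equiv 0$, i.e. $u$ is harmonic.

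With harmonicity in hand, $(M,o)$ is rotationally symmetric at the pole $o$, satisfies $Ric_M\geq 0$, and $u$ is harmonic, so the preceding theorem applies directly and produces a constant $K$ with $\int_{B_r}(K+u)\,dV=0$ for all $r\in[0,\infty)$, which is exactly the assertion.

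I expect the main obstacle to be the monotonicity-plus-integrability argument that pins $\phi$ to zero, and in particular the careful invocation of the coarea formula: one must verify that the pole hypothesis genuinely makes $r$ smooth with unit gradient, so that $dV$ decomposes as $dS_r\,dr$ and the boundary term in the divergence theorem is precisely the radial derivative. This subharmonicity-to-harmonicity step is where $|\nabla u|\in L^1(M)$ is used decisively, and it is the only place global analysis enters; everything after it is a direct appeal to the theorem.
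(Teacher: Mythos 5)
Your proof is correct, and its skeleton is exactly the paper's: convexity gives $Hess(u)\geq 0$, hence $\Delta u\geq 0$; the hypothesis $|\nabla u|\in L^1(M)$ upgrades subharmonicity to harmonicity; and the preceding theorem (the one with $Ric_M\geq 0$, which removes the sign condition $u(o)<0$ by subtracting $u(o)$ and invoking the Liouville theorem of \cite{SY94} --- this is where your assumption of non-constancy is actually consumed) then yields the integral identity. The one genuine difference is the middle step. The paper disposes of it in a single citation: Yau \cite{YAU76} proved that on \emph{any} complete Riemannian manifold, a $C^2$ subharmonic function with $\int_M|\nabla u|\,dV<\infty$ is harmonic. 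You instead reprove this result from scratch in the case at hand: monotonicity of $\phi(r)=\int_{B_r}\Delta u\,dV$, the divergence-theorem identity $\phi(r)=\int_{\partial B_r}\partial_r u\,dS_r\leq\int_{\partial B_r}|\nabla u|\,dS_r$, and the coarea formula giving $\liminf_{r\to\infty}\int_{\partial B_r}|\nabla u|\,dS_r=0$, which pins the monotone nonnegative $\phi$ to zero. This argument is sound, but note what it costs and what it buys: it leans essentially on the pole hypothesis (the notation $(M,o)$ in Section 2 of the paper does mean a manifold with a pole, so $r$ is smooth away from $o$ with $|\nabla r|=1$ and $\partial B_r$ is a smooth hypersurface, exactly as you require), whereas Yau's theorem needs no pole --- he works with cutoff functions on a general complete manifold, where the distance function can fail to be smooth on the cut locus. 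So the paper's step is more general and instantaneous, while yours is self-contained and elementary, and in the present setting the pole is freely available, so nothing is lost. Your correct observation that for $C^2$ functions geodesic convexity is equivalent to $Hess(u)\geq 0$ (so that $\Delta u=\operatorname{tr}_g Hess(u)\geq 0$) matches the fact the paper cites from Greene--Wu \cite{GW71} in its later corollary.
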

\begin{proof}
Since $u$ is convex so $u$ is subharmonic. Since $v$ has integrable norm so $u$ is harmonic \cite{YAU76}. Hence the proof follows from the above Theorem.
\end{proof}
\begin{definition}\cite{CM97}
A complete Riemannian manifold $M$ is said to satisfy a uniform Neumann-Poincar\'{e} inequality if there exists $C_N<\infty$ such that, for all $p\in M$, $r>0$ and $u\in W^{1,2}_{loc}(M),$
\begin{equation*}
\int_{B_r}\Big(u-\frac{1}{Vol(B_r)}\int_{B_r}udV \Big)^2dV\leq C_Nr^2\int_{B_r}|\nabla u|^2dV.
\end{equation*}
\end{definition}
Yau \cite{YAU75} developed the method of gradient estimate of harmonic function in a manifold and proved that for any complete Riemannian manifold with $Ric_M>-K$, where $K\geq 0$ is a constant,  every harmonic function $u$ which is bounded below satisfies the following inequality
\begin{equation}\label{eq8}
|\nabla u|\leq \sqrt{(n-1)K}(u-\inf_Mu).
\end{equation}
Now integrating the above inequality in $B_r$ and using (\ref{eq5}) and $\inf_Mu=-c<0$, for some $c>0$, we obtain
\begin{equation}\label{eq10}
\int_{B_r}|\nabla u|^2dV\leq (n-1)K\int_{B_r}(u+c)dV.
\end{equation}
Now suppose that $M$ satisfies a uniform
Neumann-Poincar\'{e} inequality and also $u\in W^{1,2}_{loc}(M)$, then
\begin{equation*}
\int_{B_r}\Big(u-\frac{1}{Vol(B_r)}\int_{B_r}udV \Big)^2dV\leq C_Nr^2\int_{B_r}|\nabla u|^2dV,
\end{equation*}
for some constant $C_N<\infty$.
Now (\ref{eq10}) implies that
\begin{equation*}
\int_{B_r}\Big(u-\frac{1}{Vol(B_r)}\int_{B_r}udV \Big)^2dV\leq (n-1)KC_Nr^2\int_{B_r}(u+c)^2dV.
\end{equation*}
Again using Theorem \ref{th2}, the above inequality implies that, there is a constant $C>0$ such that
\begin{equation}
\int_{B_r}\Big(u+C \Big)^2dV\leq (n-1)KC_Nr^2\int_{B_r}(u+c)^2dV.
\end{equation}
Hence we arrive at the following conclusion:
\begin{proposition}
Let $M$ be a $n$-dimensional complete Riemannian manifold with $Ric_M>-K$, for $K>0$ and $u\in W^{1,2}_{loc}(M)$ be a harmonic function and also $M$ satisfies a uniform
Neumann-Poincar\'{e} inequality. If $u$ is bounded below and there is a point $p\in M$ where $M$ is rotationally symmetric and such that $u(p)<0$ with $inj(p)>1$, then there exist $C>0,C_N$ and $c>0$, depending only on $n$, such that, for $0\leq r<inj(p)$, the following inequality holds 
\begin{equation}
\int_{B_r}\Big(u+C \Big)^2dV\leq (n-1)KC_Nr^2\int_{B_r}(u+c)^2dV.
\end{equation}
\end{proposition}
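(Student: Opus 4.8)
The plan is to chain together the three facts already assembled above: Yau's pointwise gradient estimate (\ref{eq8}), the uniform Neumann-Poincar\'{e} inequality, and the Integral Liouville Theorem \ref{th2}. The conceptual point is that Theorem \ref{th2} does more than make an integral vanish: it identifies the mean value of $u$ over every admissible ball $B_r$ with one and the same constant, and this is precisely what turns the oscillation term in the Neumann-Poincar\'{e} inequality into the clean quantity $\int_{B_r}(u+C)^2\,dV$.

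First I would exploit the gradient estimate. As $u$ is harmonic, bounded below, and $Ric_M>-K$, inequality (\ref{eq8}) applies; writing $\inf_M u=-c$ with $c>0$ and squaring pointwise gives $|\nabla u|^2\leq (n-1)K(u+c)^2$. Integrating this over $B_r$ produces
$$\int_{B_r}|\nabla u|^2\,dV\leq (n-1)K\int_{B_r}(u+c)^2\,dV.$$
Since $u\in W^{1,2}_{loc}(M)$ and $M$ obeys a uniform Neumann-Poincar\'{e} inequality, I would then insert this bound into
$$\int_{B_r}\Big(u-\frac{1}{Vol(B_r)}\int_{B_r}u\,dV\Big)^2dV\leq C_Nr^2\int_{B_r}|\nabla u|^2\,dV,$$
which yields $(n-1)KC_Nr^2\int_{B_r}(u+c)^2\,dV$ on the right-hand side.

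The decisive move is to evaluate the ball-average $\frac{1}{Vol(B_r)}\int_{B_r}u\,dV$. All hypotheses of Theorem \ref{th2} are in force here, namely $u$ harmonic, $M$ rotationally symmetric at $p$, $u(p)<0$, and $inj(p)>1$, so for each $r\in[0,inj(p))$ there is a positive constant, which I denote $C$, with $\int_{B_r}(C+u)\,dV=0$, equivalently $\frac{1}{Vol(B_r)}\int_{B_r}u\,dV=-C$. The essential feature is that this $C$ is independent of $r$, since in the proof of Theorem \ref{th2} the spherical mean $v(r)$ is constant on $[0,inj(p))$ and integration in the radial variable transfers this to the solid mean. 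Consequently $u-\frac{1}{Vol(B_r)}\int_{B_r}u\,dV=u+C$, and substituting into the combined estimate gives the asserted inequality for all $0\leq r<inj(p)$.

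The step demanding the most care is this last one: one must verify that the constant supplied by Theorem \ref{th2} really is the same for every admissible radius, and keep it notationally separate from the Ricci bound $K$ (the theorem itself records its constant as $K$, but here that symbol is already in use). The preceding manipulations, squaring (\ref{eq8}) and invoking the Neumann-Poincar\'{e} inequality, are routine once the membership $u\in W^{1,2}_{loc}(M)$ guarantees that the integrals are finite and the inequality is applicable.
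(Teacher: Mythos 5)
Your proof is correct and takes essentially the same route as the paper: square Yau's gradient estimate (\ref{eq8}) and integrate over $B_r$, feed the resulting bound into the uniform Neumann--Poincar\'{e} inequality, and invoke Theorem \ref{th2} to identify the ball average of $u$ with the single $r$-independent constant $-C$, turning the oscillation term into $\int_{B_r}(u+C)^2\,dV$. If anything, your version is slightly more careful than the paper's, which misprints the integrated gradient bound (\ref{eq10}) with $(u+c)$ in place of $(u+c)^2$ and reuses the symbol $K$ for the constant of Theorem \ref{th2}; you handle both points explicitly.
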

\section{\textbf{Harmonic function and gradient Ricci soliton}}
A Riemannian manifold $(M,g)$ is said to be a Ricci soliton generator if there exists a smooth vector field $X\in \chi(M)$ satisfying 
\begin{equation}
\mathcal{L}_{X}g+Ric=cg,
\end{equation}
where $c$ is a constant. Ricci soliton generators are natural generalization of Einstein metrics and it is a special solution of Ricci flow \cite{CA06}. A Ricci soliton generator is shrinking soliton, steady soliton or expanding soliton according as $c>0$, $c=0$ or $c<0$.
Let $(M,g)$ be a complete manifold. Then it is said to be complete gradient Ricci soliton if there exists a smooth function $u:M\rightarrow\mathbb{R}$ such that 
\begin{equation}
\mathcal{L}_{\nabla u}g+Ric=cg.
\end{equation}
Gradient Ricci solitons have been studied quite extensively in last decade, see(\cite{PW10}, \cite{MM08}).\\
\indent In this section we have studied general form of gradient Ricci soliton generator where the potential function is harmonic and proved that the harmonic function takes a simple form along some geodesic.
\begin{theorem}\label{th1}
Let  $(M,g)$ be a complete non-compact Riemannian manifold with $Ric_M>-K$, where $K>0$. If there exists a harmonic function $u\in C^2(M)$ which is bounded below and satisfies
\begin{equation}\label{eq6}
\mathcal{L}_{\nabla u}g+Ric\geq cg,
\end{equation}
then for some geodesic $\gamma:[0,\infty)\rightarrow M$, the function $u$ takes the form
\begin{equation}\label{eq9}
u(\gamma(t))=\frac{1}{\sqrt{(n-1)K}}(ct+G), \quad \forall 0\leq t<\infty,
\end{equation}
for some constant $G$.
\end{theorem}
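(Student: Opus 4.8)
The plan is to convert the statement into a one–dimensional assertion along a geodesic. Since the Lie derivative of the metric along a gradient field is twice the Hessian, $(\mathcal{L}_{\nabla u}g)(Y,Z)=2\,Hess(u)(Y,Z)$, hypothesis (\ref{eq6}) is the same as the symmetric $(0,2)$–tensor inequality $2\,Hess(u)+Ric\ge cg$; contracting with a unit vector $e$ gives the pointwise bound $Hess(u)(e,e)\ge\frac12\big(c-Ric(e,e)\big)$. On the other side, $Ric_M>-K$ together with harmonicity and boundedness below lets me invoke Yau's gradient estimate (\ref{eq8}), which with $a:=\sqrt{(n-1)K}$ and $f:=u-\inf_M u\ge 0$ reads $|\nabla u|=|\nabla f|\le a f$.

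Next I would fix a unit-speed geodesic $\gamma:[0,\infty)\to M$ and study $\varphi(t)=u(\gamma(t))$. Differentiating and using $\nabla_{\gamma'}\gamma'=0$ yields $\varphi'(t)=g(\nabla u,\gamma'(t))$ and $\varphi''(t)=Hess(u)(\gamma'(t),\gamma'(t))$. The desired conclusion (\ref{eq9}) is precisely the statement that $\varphi'\equiv c/a$, equivalently that $\varphi''\equiv 0$ together with the correct initial slope; so the entire problem reduces to exhibiting a ray on which the radial Hessian of $u$ vanishes identically and the radial derivative equals $c/a$. The two hypotheses now bracket $\varphi$ from both sides: (\ref{eq8}) controls the first derivative, $|\varphi'(t)|\le a\,\varphi_0(t)$ with $\varphi_0=\varphi-\inf_M u$, while (\ref{eq6}) controls the second, $\varphi''(t)\ge\frac12\big(c-Ric(\gamma'(t),\gamma'(t))\big)$.

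To locate the special ray I would look for a geodesic along which $\varphi''\equiv 0$, so that $u\circ\gamma$ is automatically affine; the real content is then to identify its slope and to guarantee that such a ray exists on all of $[0,\infty)$. The natural candidates are rays in the gradient direction, obtained on a complete noncompact manifold by an extraction (Arzel\`a--Ascoli) argument; along such a ray $\varphi'(t)=|\nabla u(\gamma(t))|$, the soliton inequality forces a lower bound on the growth of the radial derivative through $\varphi''\ge\frac12(c-Ric(\gamma',\gamma'))$, while Yau's estimate $\varphi'\le a\,\varphi_0$ caps it from above. Balancing these two competing constraints for all large $t$ is what should single out a constant slope and force $\varphi''\equiv 0$; once $\varphi''\equiv 0$ and the slope equals $c/a$, one reads off $u(\gamma(t))=\frac1a(ct+G)$ with $G=a\,u(\gamma(0))$, which is exactly (\ref{eq9}).

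The genuine difficulty is this balancing step, and in particular the identification of the slope as exactly $c/\sqrt{(n-1)K}$. Two features make it delicate. First, only a \emph{lower} Ricci bound is assumed, so the inequality $\varphi''\ge\frac12(c-Ric(\gamma',\gamma'))$ gives no \emph{upper} control on the radial Hessian and cannot be upgraded to $\varphi''=0$ without additional input along the ray. Second, the drift constant $c$ coming from (\ref{eq6}) and the normalization $a=\sqrt{(n-1)K}$ coming from (\ref{eq8}) enter through inequalities of different order — first order for the gradient estimate, second order for the soliton — so reconciling them into a single linear profile of slope $c/a$ requires a careful rigidity analysis of the equality cases of both (\ref{eq6}) and (\ref{eq8}) simultaneously. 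Establishing that these equality cases are compatible and persist along an entire ray, rather than at a single point, is in my view the crux of the theorem; the reductions in the first two paragraphs are routine differentiation and bookkeeping.
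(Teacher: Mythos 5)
Your proposal is a plan with its central step missing, and the missing step is exactly where the paper's one genuinely nontrivial idea lives. The paper does not perform any rigidity analysis of the equality cases of (\ref{eq6}) and (\ref{eq8}): instead it invokes Ambrose's non-compactness criterion to produce a unit-speed geodesic ray $\gamma:[0,\infty)\rightarrow M$ along which $\int_0^\infty Ric(\gamma',\gamma')\,dt<\infty$ (equation (\ref{eq7})). This is the device that selects the special geodesic, and nothing in your proposal replaces it. Moreover, the paper never works with the pointwise Hessian inequality $\varphi''\geq\frac{1}{2}(c-Ric(\gamma',\gamma'))$ as you do; it observes that $(\mathcal{L}_{\nabla u}g)(\gamma',\gamma')=2\frac{d}{dt}\,g(\nabla u,\gamma')$ is a total derivative, so integrating (\ref{eq6}) along $\gamma$ turns the second-order data into the boundary term $g(\nabla u_{\gamma(T)},\gamma'(T))$, which is then controlled by Cauchy--Schwarz and Yau's estimate (\ref{eq8}); the finiteness of the Ricci integral from (\ref{eq7}) then pins down the asymptotics of $u\circ\gamma$. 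Your ``balancing/rigidity'' step, which you yourself identify as the crux, is never carried out, so what you have written is a reduction, not a proof.

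Two specific points in your plan would also fail as stated. First, your candidate construction --- rays ``in the gradient direction'' obtained by Arzel\`a--Ascoli, along which $\varphi'(t)=|\nabla u(\gamma(t))|$ --- is unjustified: a geodesic tangent to $\nabla u$ at one instant does not remain tangent to it, and requiring $\gamma'=\nabla u/|\nabla u|$ for all $t$ would force the integral curves of the normalized gradient to be geodesics, which there is no reason to expect here; so the identity $\varphi'=|\nabla u\circ\gamma|$ on an entire ray has no basis. Second, as you yourself note, the hypotheses only bound $\varphi''$ from below and $|\varphi'|$ from above, and no amount of pointwise bracketing of this kind forces $\varphi''\equiv 0$ with a prescribed slope; some global input along the ray is indispensable, and in the paper that input is precisely the integrability condition (\ref{eq7}). (You may fairly observe that even the paper's final inference --- passing from the boundedness of $cT-2\sqrt{(n-1)K}\,u(\gamma(T))$ as $T\rightarrow\infty$ to the exact affine form (\ref{eq9}) --- is itself a leap rather than a deduction, but your proposal does not reach even that stage, since the special geodesic is never produced.)
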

\begin{proof}
Since $M$ is not compact, so by Ambrose's compactness criteria \cite{AM57} there exists a geodesic $\gamma:[0,\infty)\rightarrow M$, parametrized by arc length, such that 
\begin{equation}\label{eq7}
\int_{0}^{\infty}Ric(\gamma',\gamma')dt<\infty.
\end{equation}
Then taking $X=\nabla u$, along $\gamma$ we have
$$\mathcal{L}_Xg(\gamma',\gamma')=2g(\nabla_{\gamma'}X,\gamma')=2\frac{d}{dt}[g(X,\gamma')].$$
Hence using above equation and (\ref{eq6}), we obtain
$$Ric(\gamma',\gamma')\geq cg(\gamma',\gamma')-2\frac{d}{dt}[g(X,\gamma')].$$
Now taking $\gamma(0)=q$ and integrating and using the Cauchy-Schwarz inequality, we get
\begin{eqnarray*}
\int_{0}^{T}Ric(\gamma',\gamma')dt &\geq & c\int_{0}^{T}g(\gamma',\gamma')dt-2\int_{0}^{T}\frac{d}{dt}[g(X,\gamma')]dt\\
&\geq & cT+2g(X_q,\gamma'(0))-2g(X_{\gamma(T)},\gamma'(T))\\
&\geq & cT+2g(X_q,\gamma'(0))-2\norm{X_{\gamma(T)}}.
\end{eqnarray*}
Now from (\ref{eq7}), we get
\begin{equation*}
\lim\limits_{T\rightarrow\infty}(cT+2g(X_q,\gamma'(0))-2\norm{X_{\gamma(T)}})<\infty.
\end{equation*}
Hence 
\begin{equation}
\lim\limits_{T\rightarrow\infty}(cT-2\norm{X_{\gamma(T)}})<\infty.
\end{equation}
Since $Ric_M>-K$ and $u$ is harmonic with lower bound, then gradient estimate Theorem (\ref{eq8}) implies that
\begin{equation*}
\lim\limits_{T\rightarrow\infty}(cT-2\sqrt{(n-1)K}(u-\inf_Mu))<\infty.
\end{equation*}
Consequently
\begin{equation*}
\lim\limits_{T\rightarrow\infty}(cT-2u\sqrt{(n-1)K})<\infty.
\end{equation*}
Hence there exists a constant $G$ such that
\begin{equation}
u(\gamma(t))=\frac{1}{\sqrt{(n-1)K}}(ct+G), \quad \forall 0\leq t<\infty,
\end{equation}
\end{proof}
There are some immediate consequences of the above theorem.
\begin{corollary}
Suppose $(M,g)$ is a complete non-compact Riemannian manifold with $Ric_M>-K$. If there is non-positive harmonic function $u\in C^2(M)$ satisfying (\ref{eq6}) for some positive constant $c$, then $u$ is constant. Hence $M$ satisfies $Ric\geq cg.$
\end{corollary}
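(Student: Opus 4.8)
The plan is to read the statement off Theorem \ref{th1} by contradiction, playing the forced linear growth against the sign hypothesis $u\le 0$. First I would check that the hypotheses of Theorem \ref{th1} are in force: $M$ is complete and non-compact, $Ric_M>-K$, and $u$ is harmonic and satisfies the soliton inequality (\ref{eq6}) with the same $c>0$. The non-positivity $u\le 0$ supplies the upper bound, and I would take the boundedness below needed for the gradient estimate (\ref{eq8}) as part of the standing assumptions so that Theorem \ref{th1} applies verbatim. Applying it, there is a geodesic $\gamma:[0,\infty)\to M$, parametrised by arc length, along which
\[
u(\gamma(t))=\frac{1}{\sqrt{(n-1)K}}\,(ct+G),\qquad 0\le t<\infty,
\]
for some constant $G$.

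The key step is then a pure sign argument. Suppose $u$ were non-constant. Then the coefficient $c/\sqrt{(n-1)K}$ is strictly positive, so $u(\gamma(t))\to+\infty$ as $t\to\infty$. This contradicts $u(\gamma(t))\le 0$ for all $t$, which holds because $u$ is non-positive on all of $M$. Hence $u$ cannot be non-constant, and so $u$ is constant.

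Once $u$ is constant, the curvature conclusion is immediate: $\nabla u\equiv 0$ forces $\mathcal{L}_{\nabla u}g=0$, so the soliton inequality (\ref{eq6}) collapses to $Ric\ge cg$ on all of $M$, which is exactly the asserted conclusion.

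The step I expect to require the most care is logical rather than computational. Theorem \ref{th1} delivers its linear formula without itself assuming $u$ is non-constant, so I must phrase the contradiction so that it excludes precisely the non-constant candidates: a strictly positive slope $c/\sqrt{(n-1)K}$ is incompatible with $u\le 0$, whereas the constant case escapes the contradiction because there $\nabla u\equiv 0$, the quantity $\norm{X_{\gamma(T)}}$ vanishes identically, and the growth estimate degenerates. A secondary point is that non-positivity by itself does not furnish the lower bound that the gradient estimate (\ref{eq8}) consumes; I would therefore treat boundedness below as inherited from the hypotheses of Theorem \ref{th1}, upon which this corollary explicitly builds.
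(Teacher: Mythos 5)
Your overall route is the one the paper intends: this corollary appears with no written proof, as an ``immediate consequence'' of Theorem \ref{th1}, and your two main steps --- letting the strictly positive slope $c/\sqrt{(n-1)K}$ in (\ref{eq9}) collide with $u\le 0$, and then collapsing (\ref{eq6}) to $Ric\ge cg$ once $\nabla u\equiv 0$ --- are exactly the expected derivation. However, two points you raise yourself are genuine problems, not side remarks, and your resolutions of them do not hold up.

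First, the missing lower bound cannot be ``treated as inherited'': the corollary as stated assumes only $u\le 0$, which is an upper bound, while the gradient estimate (\ref{eq8}) powering Theorem \ref{th1} consumes a lower bound. The gap is not cosmetic. If one tries to salvage the argument with only $u\le 0$, the natural move is to apply the gradient estimate to the nonnegative harmonic function $-u$, giving $|\nabla u|\le \sqrt{(n-1)K}\,(-u)$ up to the infimum term; the forced linear growth of $\norm{X_{\gamma(T)}}$ then pushes $u(\gamma(T))\to -\infty$, which is perfectly compatible with non-positivity, so no contradiction arises. Thus what you prove is the corollary with the extra hypothesis ``$u$ bounded below'' (i.e.\ $u$ bounded), which is strictly less than the statement; this should be said outright rather than folded into the standing assumptions. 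Second, your claim that the constant case ``escapes the contradiction'' is false, both at the level of Theorem \ref{th1}'s statement (a constant left-hand side cannot equal the strictly increasing right-hand side of (\ref{eq9}) when $c>0$) and at the level of its proof: if $u$ is constant, (\ref{eq6}) reads $Ric\ge cg$ with $c>0$, so $\int_0^\infty Ric(\gamma',\gamma')\,dt=\infty$ along every geodesic --- contradicting (\ref{eq7}); equivalently, Bonnet--Myers forces $M$ compact, against the hypothesis. So within the paper's framework the corollary's hypotheses are unsatisfiable on a complete non-compact manifold, the statement is vacuously true, and the constant/non-constant dichotomy your contradiction argument relies on is not supported by Theorem \ref{th1}. (The root cause is the paper's own overstatement: the proof of Theorem \ref{th1} yields only a linear lower bound for $u\circ\gamma$, not the equality (\ref{eq9}); but since you quote the theorem verbatim, your write-up inherits the inconsistency, and the specific patch you propose --- that the growth estimate ``degenerates'' when $\nabla u\equiv 0$ --- fails, because for constant $u$ the derived inequality $\int_0^T Ric(\gamma',\gamma')\,dt\ge cT+2g(X_q,\gamma'(0))$ already blows up as $T\to\infty$.)
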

\begin{corollary}
Let $M$ be a complete Riemannian manifold with $Ric_M>-K$. If there is a convex function $u\in C^2(M)$ which is bounded below and $|\nabla u|\in L^1(M)$ and also satisfies (\ref{eq6}), then there is a geodesic $\gamma:[0,\infty)\rightarrow M$, where $u$ takes the form (\ref{eq9}).
\end{corollary}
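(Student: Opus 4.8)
The plan is to reduce the statement to Theorem \ref{th1} by upgrading the convex function $u$ to a harmonic one; the whole argument runs parallel to the corollary following Theorem \ref{th2}, and the only genuine content is the passage from convexity to harmonicity.

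First I would show that convexity forces $u$ to be subharmonic. For a $C^2$ function, the geodesic-convexity condition in the definition of a convex function is equivalent to $\mathrm{Hess}(u)\geq 0$: along any unit-speed geodesic $\gamma$ one has $(u\circ\gamma)''(t)=\mathrm{Hess}(u)(\gamma'(t),\gamma'(t))$ because $\nabla_{\gamma'}\gamma'=0$, and convexity of each $u\circ\gamma$ means precisely that this second derivative is non-negative in every tangent direction. Tracing the Hessian with respect to $g$ then gives $\Delta u=\mathrm{tr}_g\,\mathrm{Hess}(u)\geq 0$, so $u$ is subharmonic.

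Next, since $u$ is subharmonic on the complete manifold $M$ and $|\nabla u|\in L^1(M)$, Yau's theorem \cite{YAU76} forces $u$ to be harmonic. Concretely, integrating $\Delta u$ over $B_r$ and applying the divergence theorem bounds $\int_{B_r}\Delta u\,dV$ by $\int_{\partial B_r}|\nabla u|\,dS_r$; integrability of $|\nabla u|$ produces a sequence $r_i\to\infty$ along which this boundary integral tends to $0$, whence $\int_M\Delta u\,dV=0$, and combined with $\Delta u\geq 0$ this yields $\Delta u\equiv 0$.

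Finally, $u$ is now a harmonic function that is bounded below and satisfies (\ref{eq6}) on a complete manifold with $Ric_M>-K$, so Theorem \ref{th1} applies verbatim and produces the geodesic $\gamma:[0,\infty)\to M$ along which $u$ has the form (\ref{eq9}). The one point that needs care is that Theorem \ref{th1} is stated for non-compact $M$, since its proof invokes Ambrose's criterion to extract a complete ray $\gamma$; I expect this to be the main obstacle. If $M$ is compact, however, the subharmonic $u$ attains an interior maximum and is constant by the strong maximum principle, so $\nabla u\equiv 0$ and the asserted form degenerates; thus the substantive case is exactly the non-compact one, where the reduction above closes the argument.
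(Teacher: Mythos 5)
Your proposal matches the paper's proof exactly: convexity gives subharmonicity (which the paper cites to Greene--Wu rather than deriving via the Hessian), Yau's $L^1$-gradient Liouville theorem upgrades $u$ to harmonic, and Theorem \ref{th1} then applies. Your extra observation about the compactness gap is well taken --- the corollary omits the ``non-compact'' hypothesis that Theorem \ref{th1} (via Ambrose's criterion) actually needs, and the paper silently ignores this --- but it does not change the route, only patches it.
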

\begin{proof}
The convexity of $u$ implies that $u$ is subharmonic \cite{GW71}. Yau \cite{YAU76} proved that any subharmonic function $u\in C^2(M)$ with integrable norm on a complete Riemannian manifold is harmonic. Hence this corollary follows from Theorem \ref{th1}.
\end{proof}

\section*{\textbf{Acknowledgment}}
 The second author greatly acknowledges to
The University Grants Commission, Government of India for the award of Junior Research
Fellowship.

$\bigskip $

$^{1,2}$The University of Burdwan, 

Department of Mathematics,

 Golapbag, Burdwan-713104,
 
 West Bengal, India.
 
$^1$E-mail:aask2003@yahoo.co.in, aashaikh@math.buruniv.ac.in

$^2$E-mail:chan.alge@gmail.com

\end{document}